\documentclass[11pt]{amsart}
\usepackage{amsmath}
\usepackage{color}
\vfuzz2pt 
\hfuzz2pt 
\newtheorem{thm}{Theorem}[section]

\newtheorem{lem}[thm]{Lemma}
\newtheorem{prop}[thm]{Proposition}
\theoremstyle{definition}

\theoremstyle{remark}
\newtheorem{rem}[thm]{\bf Remark}

\numberwithin{equation}{section}


\begin{document}
\title[Dense and subspace dense subsets in finite-dimensional spaces]{Dense and subspace dense subsets in finite-dimensional spaces}

\author{Salah Herzi $^{(1)}$ and Habib Marzougui $^{(2)}$}

\address{$^{(1)}$ Salah Herzi, \ University of Carthage, \ Preparatory Engineering Institute of Bizerte, Jarzouna, 7021, Tunisia.}
\email{salahherziamor@gmail.com}
\address{$^{(2)}$ Habib Marzougui, University of Carthage, Faculty
of Science of Bizerte, (UR17ES21), ``Dynamical systems and their applications'', 7021, Jarzouna, Bizerte, Tunisia}
\email{habib.marzougui@fsb.rnu.tn}
\subjclass[2000]{47A16, 47A15 \\
$^{(2)}$ Corresponding author}
\keywords{Finite-dimensional space, dense, subspace}

\begin{abstract}
This note is motivated by the article of Bamerni, Kadets and Kili\c{c}man [J. Math. Anal. Appl. 435 (2), 1812--1815 (2016)]. We consider the remaining problem which claims that if $A$ is a dense subset of a finite dimensional space $X$,
then there is a nontrivial subspace $M$ of $X$ such that $A\cap M$ is dense in $M$. 
We show that the above problem has a negative answer when $X=\mathbb{K}^{n}$ ($\mathbb{K}= \mathbb{R}$ or $\mathbb{C}$) for every $n\geq 2$. 
\end{abstract}
\maketitle

\section{\bf Introduction }

 In \cite[Theorem 2.1]{bkk}, Bamerni, Kadets and Kili\c{c}man established that if $A$ is a dense subset of a Banach space $X$, then there is a nontrivial closed subspace $M$ of $X$ such that $A\cap M $ is dense in $M$.
 By a nontrivial subspace $M$ of $X$, we mean that $M$ is non-zero and distinct from $X$. We acknowledge here that the authors of \cite{bkk}
   do not clearly mean $X$ of infinite dimension. So Theorem 2.1 in finite dimension remains an open problem.
\medskip

 \textit{Problem 1}. Is Theorem 2.1 of \cite{bkk} true for $X$ of finite dimension?
 \medskip
 
  In the present note, we show that Problem 1 does not hold in finite dimension, that is \cite[Theorem 2.1]{bkk} is not true for every finite dimensional space $X=\mathbb{K}^{n}$, $n\geq 2$, $\mathbb{K} = \mathbb{R} \textrm{ or } \mathbb{C}$. 
 \medskip

In the sequel, 
$\mathbb{N}$ and $\mathbb{Z}$ denote the sets of non-negative integers and integers,
respectively,  $\mathbb{N}$ while $\mathbb{N}_{0}$ denotes the set of positive integers. For a row vector $v\in\mathbb{K}^{n}$, we will denote its transpose by $v^{T}$. If $v_1,\dots, v_p\in\mathbb{K}^{n}$,  $p\geq 1$, we denote by $\textrm{span}\{v_1,\dots, v_p\}$ the vector subspace of $\mathbb{K}^{n}$ generated by $v_1,\dots, v_p$. A subset $E\subset \mathbb{K}^{n}$ is called \textit{dense} in ${\mathbb{K}}^{n}$ if $\overline{E} =
{\mathbb{K}}^{n}$, where $\overline{E}$
denotes the closure of $E$. 
It is called \textit{nowhere dense} if $\overline{E}$ has empty interior.

 \section{\bf Subspace dense subset of $\mathbb{K}^{n}$}

The aim of this section is to prove the following theorem.
\medskip

\begin{thm}[The case $X=\mathbb{R}^{n}$] \label{t21} Let $n\geq 2$ be an integer and let $\alpha=(\alpha_{1},\dots, \alpha_{n})$ be an $n$-tuple of negative real numbers such that $1, \alpha_{1},\dots, \alpha_{n}$ are linearly independent over $\mathbb{Q}$. Set $A_{\alpha}=\mathbb{N}^{n}+\mathbb{N}[\alpha_{1},\dots, \alpha_{n}]^{T}$. Then $A_{\alpha}$ is dense in $\mathbb{R}^{n}$ and for every nontrivial subspace $M$ of $\mathbb{R}^{n}$, $A_{\alpha}\cap M$ is not dense in $M$.
\end{thm}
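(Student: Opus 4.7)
The plan is to handle the two claims---density of $A_{\alpha}$ in $\mathbb{R}^{n}$ and the failure of density of $A_{\alpha}\cap M$ on every nontrivial subspace $M$---by separate arguments, reducing the second, harder claim to a rank inequality for the enveloping group $G:=\mathbb{Z}^{n}+\mathbb{Z}\alpha$. For the density of $A_{\alpha}$, I would invoke Kronecker's equidistribution theorem: since $1,\alpha_{1},\dots,\alpha_{n}$ are $\mathbb{Q}$-linearly independent, the set $\{m\alpha\bmod\mathbb{Z}^{n}:m\in\mathbb{N}\}$ is dense in the torus $\mathbb{R}^{n}/\mathbb{Z}^{n}$. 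Given $x\in\mathbb{R}^{n}$ and $\varepsilon>0$, I would pick $m\in\mathbb{N}$ large enough that $x_{i}-m\alpha_{i}>0$ for every $i$ (possible since $\alpha_{i}<0$) and so that the torus-distance between $m\alpha$ and $x$ is less than $\varepsilon/\sqrt{n}$; then the coordinatewise nearest integer $k$ to $x-m\alpha$ lies in $\mathbb{N}^{n}$ and $\|k+m\alpha-x\|<\varepsilon$.

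For the non-density claim on a nontrivial subspace $M$ of dimension $d$, I would embed $A_{\alpha}\subset G$ and instead prove the stronger statement that $G\cap M$ is not dense in $M$. The linear-independence hypothesis makes $\{e_{1},\dots,e_{n},\alpha\}$ a $\mathbb{Z}$-basis of $G$ (if $\sum k_{i}e_{i}+m\alpha=0$ with integer coefficients, then $k_{i}+m\alpha_{i}=0$ forces $m=0$, lest $\alpha_{i}=-k_{i}/m\in\mathbb{Q}$), so $G$ is free abelian of rank $n+1$; Kronecker again gives that $G$ is dense in $\mathbb{R}^{n}$. Set $r:=n-d\geq 1$ and let $\pi\colon\mathbb{R}^{n}\to\mathbb{R}^{n}/M\cong\mathbb{R}^{r}$ be the canonical quotient; then $\pi(G)$ is a finitely generated dense subgroup of $\mathbb{R}^{r}$ (a continuous linear surjection carries dense sets to dense sets).

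The engine is the elementary structural lemma: any finitely generated dense subgroup of $\mathbb{R}^{r}$ has $\mathbb{Z}$-rank at least $r+1$. Indeed, if a f.g.\ subgroup has rank $s\leq r$, then its $\mathbb{R}$-span has dimension at most $s$; either that span is a proper subspace (and the subgroup is not dense) or $s=r$ and the $r$ spanning generators must be $\mathbb{R}$-linearly independent, making the subgroup a lattice in $\mathbb{R}^{r}$---still not dense. Applying this lemma to $\pi(G)$ gives $\mathrm{rank}(\pi(G))\geq r+1$; rank-additivity in the short exact sequence $0\to G\cap M\to G\to\pi(G)\to 0$ of torsion-free finitely generated abelian groups yields $\mathrm{rank}(G\cap M)\leq(n+1)-(r+1)=d$. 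The same lemma applied contrapositively to $G\cap M\subset M\cong\mathbb{R}^{d}$ shows that $G\cap M$ is not dense in $M$, and since $A_{\alpha}\cap M\subset G\cap M$, the same holds for $A_{\alpha}\cap M$. The main conceptual step is the structural lemma; once that is in hand, the rest---passing from $A_{\alpha}$ to $G$, invoking rank additivity, and applying the lemma twice---is essentially bookkeeping.
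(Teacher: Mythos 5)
Your proof is correct, and it takes a genuinely different and in fact cleaner route than the paper's. The paper argues by hand: it chooses an echelon basis of $M$, reduces to a $k$-dimensional situation where the basis can be normalized to $u_i=e_i+y_ie_k$, and then splits into cases according to whether $1,y_1,\dots,y_{k-1}$ are rationally independent, in each case trapping a dense subset of $M^{(k)}$ inside a subgroup with too few generators and invoking its Lemma~2.3 (a finitely generated subgroup of $\mathbb{R}^d$ with at most $d$ generators is nowhere dense). Your argument uses exactly that same lemma as its only analytic input, but replaces the entire coordinate computation and case analysis by rank additivity for the free group $G=\mathbb{Z}^n+\mathbb{Z}\alpha\cong\mathbb{Z}^{n+1}$ along the quotient $\pi:\mathbb{R}^n\to\mathbb{R}^n/M$: density of $\pi(G)$ forces $\operatorname{rank}\pi(G)\ge (n-d)+1$, hence $\operatorname{rank}(G\cap M)\le d$, hence $G\cap M$ (being a subgroup of $\mathbb{Z}^{n+1}$, thus free of rank $\le d$ and finitely generated) is nowhere dense in $M\cong\mathbb{R}^d$. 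This buys you a strictly stronger conclusion --- non-density of $G\cap M$ for the full group $G$, of which $A_\alpha$ is merely a subset --- and a statement that visibly generalizes to any finitely generated dense subgroup of $\mathbb{R}^n$ of rank $n+1$. Two small points you should state explicitly rather than leave implicit: that $G\cap M$ is finitely generated (it is, as a subgroup of a finitely generated free abelian group), since the lemma requires finite generation; and, in the density argument for $A_\alpha$, that the tail $\{m\alpha \bmod \mathbb{Z}^n : m\ge N\}$ is still dense so that the sign conditions $x_i-m\alpha_i>0$ and the torus approximation can be met simultaneously (or simply cite the version of Kronecker's theorem for negative $\alpha_i$ that the paper uses as Lemma~2.2).
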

\medskip

For this, let recall the following multidimensional real version of Kronecker's
theorem (see e.g. \cite[Lemma 2.2]{sh}
).
\medskip

\begin{lem}$(${\it Kronecker's Theorem} \cite[Lemma 2.2]{sh}$)$
 \label{l22}  Let $n\geq 1$ be an integer and let  $\alpha_{1},\dots,\alpha_{n}$ be negative real numbers such that
	$1, \alpha_{1},\dots,\alpha_{n}$ are linearly independent over $\mathbb{Q}$. Then the set
	 $$\mathbb{N}^{n}+\mathbb{N}[\alpha_{1},\dots,\alpha_{n}]^{T}:=\left\{[s_{1},\dots,s_{n}]^{T} +s[\alpha_{1},\dots,\alpha_{n}]^{T}:
	\ s,s_{1},\dots,s_{n}\in\mathbb{N}\right\}$$ is dense in $\mathbb{R}^{n}$.
\end{lem}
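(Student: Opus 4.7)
The first assertion is immediate: $A_\alpha$ is precisely the set whose density in $\mathbb{R}^n$ is the content of Lemma \ref{l22}.

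For the second assertion, fix a nontrivial subspace $M$ of $\mathbb{R}^n$. The plan is to isolate the rational part of $M$: set $\Lambda := M \cap \mathbb{Z}^n$ and let $M_0$ denote the $\mathbb{R}$-linear span of $\Lambda$. Then $M_0 \subseteq M$ is itself a rational subspace of $\mathbb{R}^n$ with $\dim M_0 = \mathrm{rank}(\Lambda)$, and the argument splits according to whether $M$ is rational ($M_0 = M$) or not ($M_0 \neq M$, hence strictly smaller).

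In the rational case I would write $M = \ker L$ for a matrix $L$ with rational entries and no zero row. Any $y = p + s\alpha \in A_\alpha \cap M$ would give $Lp + sL\alpha = 0$; but the $\mathbb{Q}$-linear independence of $1, \alpha_1, \dots, \alpha_n$ makes every component of $L\alpha$ irrational (each is a nontrivial $\mathbb{Q}$-linear combination of the $\alpha_j$), whereas $Lp$ has rational components, so $s = 0$ is forced. Hence $A_\alpha \cap M = M \cap \mathbb{N}^n \subseteq \Lambda$, a subset of the discrete lattice $\Lambda$ in $M$, which is nowhere dense in $M$.

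If instead $M_0 \neq M$, consider the quotient $\pi : M \to M/M_0$, a real vector space of dimension $\dim M - \dim M_0 \geq 1$. For each $s \in \mathbb{N}$ put $F_s := (s\alpha + \mathbb{N}^n) \cap M$, and set $S := \{ s \in \mathbb{N} : \exists\, p \in \mathbb{Z}^n,\ s\alpha + p \in M \}$; this is a submonoid of $\mathbb{N}$. For $s \in S$, choosing any witness $p_s^*$ and letting $g(s) := \pi(s\alpha + p_s^*)$ gives a well-defined monoid homomorphism $g : S \to M/M_0$ (any two witnesses differ by an element of $\Lambda \subseteq M_0 = \ker \pi$; additivity follows because $p_{s_1}^* + p_{s_2}^*$ is a witness for $s_1 + s_2$). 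Every point of $F_s$ has the form $s\alpha + p_s^* + \lambda$ with $\lambda \in \Lambda$, so $\pi(F_s) \subseteq \{g(s)\}$, and hence $\pi(A_\alpha \cap M) \subseteq g(S)$. Extending $g$ uniquely to a group homomorphism $\tilde g$ on $\langle S \rangle = d\mathbb{Z}$ (where $d := \gcd(S)$), we obtain $g(S) \subseteq \mathbb{Z} \cdot \tilde g(d)$, a cyclic subgroup of $M/M_0$. Such a subgroup is trivial or uniformly discrete, hence not dense in the positive-dimensional quotient; it follows that $A_\alpha \cap M$ is not dense in $M$.

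The step I expect to be trickiest is the bookkeeping in the non-rational case --- identifying the right quotient $M/M_0$, verifying that each slice $F_s$ collapses to a single coset, and checking additivity of $g$. The conceptual point is that $s$ contributes only one extra direction modulo the integer lattice of $M$, so the countable collection of slices is forced onto a one-parameter arithmetic progression in the quotient, which can never be dense once the quotient has positive dimension.
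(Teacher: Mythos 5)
The statement you were asked to prove is Lemma \ref{l22} itself, i.e.\ the Kronecker-type density assertion that $\mathbb{N}^{n}+\mathbb{N}[\alpha_{1},\dots,\alpha_{n}]^{T}$ is dense in $\mathbb{R}^{n}$. Your proposal disposes of exactly this assertion in one circular sentence (``the first assertion is immediate: $A_{\alpha}$ is precisely the set whose density is the content of Lemma \ref{l22}'') and then devotes all of its effort to a different result, namely the second half of Theorem \ref{t21} (that $A_{\alpha}\cap M$ is not dense in any nontrivial subspace $M$). So the statement actually at issue is left unproven. The paper does not reprove the lemma either --- it quotes \cite[Lemma 2.2]{sh} and \cite[Theorem 442]{hr} --- but a blind attempt must either reproduce the classical argument or reduce honestly to a cited classical fact; invoking the lemma to prove the lemma does neither. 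A genuine proof runs roughly as follows: since $1,\alpha_{1},\dots,\alpha_{n}$ are linearly independent over $\mathbb{Q}$, the Kronecker--Weyl theorem gives that the points $s[\alpha_{1},\dots,\alpha_{n}]^{T}$ reduced modulo $\mathbb{Z}^{n}$ are dense in the torus; given a target $x\in\mathbb{R}^{n}$ and $\varepsilon>0$, choose arbitrarily large $s$ for which the fractional parts of $s\alpha_{1},\dots,s\alpha_{n}$ approximate those of $x_{1},\dots,x_{n}$ within $\varepsilon$; because each $\alpha_{i}<0$, the quantities $x_{i}-s\alpha_{i}$ are eventually positive, so the integer corrections $s_{i}$ needed to land within $\varepsilon$ of $x_{i}$ can be taken in $\mathbb{N}$ rather than in $\mathbb{Z}$. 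The negativity hypothesis, which your write-up never uses, is precisely what makes the restriction to non-negative $s_{i}$ and $s$ harmless.

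For what it is worth, the subspace argument you do give --- splitting on whether $M$ is a rational subspace, and in the irrational case passing to the quotient $M/M_{0}$ by the span of $M\cap\mathbb{Z}^{n}$, where each slice $F_{s}$ collapses to a single point of a cyclic subgroup --- appears sound, and is a genuinely different and arguably cleaner route to Theorem \ref{t21} than the coordinate-by-coordinate computation in the paper. But it is an answer to a different question, and it silently presupposes the very density statement you were asked to establish.
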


\begin{lem}$($\cite[Lemma 2.1]{sh}$)$ \label{l23}  Let $n, m\geq 1$ be two integers and let $H = \mathbb{Z}u_1 +\dots + \mathbb{Z}u_m$ with $u_k\in \mathbb{R}^{n}$, $k = 1,\dots, m$. If $m\leq n$, then $H$ is nowhere dense in $\mathbb{R}^{n}$.
\end{lem}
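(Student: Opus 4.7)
The plan is to split on the real-linear dimension of the space spanned by the generators. Set $V=\mathrm{Vect}\{u_1,\dots,u_m\}\subset\mathbb{R}^{n}$ and let $d=\dim_{\mathbb{R}} V$. Since $V$ is the span of $m$ vectors, $d\leq m\leq n$, so there are only two cases to handle: $d<n$ and $d=n$.

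In the first case ($d<n$), the subspace $V$ is a proper linear subspace of $\mathbb{R}^{n}$, hence closed with empty interior, i.e.\ nowhere dense. Since $H\subset V$, we obtain $\overline{H}\subset V$, and therefore $\overline{H}$ has empty interior. This disposes of the case where the generators are linearly dependent over $\mathbb{R}$ or where $m<n$.

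In the second case ($d=n$), the inequalities $d\leq m\leq n$ force $m=n$ and the vectors $u_1,\dots,u_n$ to be linearly independent over $\mathbb{R}$. The plan is to argue that $H=\mathbb{Z}u_1+\dots+\mathbb{Z}u_n$ is then a full-rank lattice, and hence a discrete subgroup of $\mathbb{R}^{n}$. Concretely, one can use the change-of-basis matrix $P=[u_1\ \dots\ u_n]\in GL_{n}(\mathbb{R})$ to get $H=P(\mathbb{Z}^{n})$; since $\mathbb{Z}^{n}$ is discrete and closed in $\mathbb{R}^{n}$ and $P$ is a homeomorphism, $H$ is discrete and closed. A discrete subset of $\mathbb{R}^{n}$ is automatically closed and has empty interior (each point is isolated, so any ball around a point of $H$ eventually contains only that point, which is not interior in $\mathbb{R}^{n}$ for $n\geq 1$). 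Thus $\overline{H}=H$ has empty interior, and $H$ is nowhere dense.

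The only delicate point is to justify cleanly that $P(\mathbb{Z}^{n})$ is discrete, which reduces to continuity of $P^{-1}$; no serious obstacle arises. The whole proof is therefore a short case analysis combining the elementary fact that proper subspaces are nowhere dense with the standard observation that a full-rank lattice is a discrete closed subgroup.
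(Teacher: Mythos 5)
The paper offers no proof of this lemma at all: it is quoted verbatim from Shkarin (\cite[Lemma 2.1]{sh}) and used as a black box, so there is nothing internal to compare against. Your argument is correct and self-contained. The dichotomy on $d=\dim\mathrm{Vect}\{u_1,\dots,u_m\}$ works: if $d<n$ then $H$ sits inside a proper (hence closed, empty-interior) subspace; if $d=n$ then $m\leq n$ forces $m=n$ with $u_1,\dots,u_n$ linearly independent, so $H=P(\mathbb{Z}^n)$ for $P\in GL_n(\mathbb{R})$ is the homeomorphic image of a closed discrete set, hence closed, discrete, and of empty interior. Note that this is precisely where the hypothesis $m\leq n$ is used; for $m>n$ the conclusion genuinely fails (e.g.\ $\mathbb{Z}+\mathbb{Z}\sqrt{2}$ is dense in $\mathbb{R}$), so your case split captures the content of the lemma. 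One small blemish: the parenthetical claim that ``a discrete subset of $\mathbb{R}^n$ is automatically closed'' is false in general (consider $\{1/k: k\geq 1\}$); it is harmless here only because you had already obtained closedness of $H$ from $P$ being a homeomorphism and $\mathbb{Z}^n$ being closed, but the sentence should be deleted or rephrased as ``a closed discrete subset has empty interior.''
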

\medskip

\textbf{Proof of Theorem \ref{t21}}. First $A_{\alpha}$ is dense in $\mathbb{R}^{n}$ by Lemma \ref{l22}. Now suppose that there is a nontrivial subspace $M$ of $\mathbb{R}^{n}$ of dimension $r$ ($1\leq r\leq n-1$) such that $A_{\alpha}\cap M$ is dense in $M$. One can choose a basis $\{v_{1},v_{2},\dots,v_{r}\}$ of $M$ so that
$$\begin{cases}
v_{1} & = [x_{1,1},x_{2,1},\dots,x_{n,1}]^{T},\\
v_{2} & = [0,x_{2,2},\dots,x_{n,2}]^{T},\\
\vdots\\
v_{r} & = [0,\dots,0,x_{r,r},\dots,x_{n,r}]^{T}
\end{cases}$$  with $x_{i,j}\in \mathbb{R}$.
	\bigskip
	
	$\bullet$ First, we have $x_{1,1}\neq 0$. Indeed, for every $u\in A_{\alpha}\cap M$, there exist $m, m_1,\dots, m_n\in \mathbb{N}$ and $\lambda_{1},\dots,\lambda_{r}\in\mathbb{R}$ such that $$(1) \ \  u =[m_{1},\dots,m_{n}]^{T}+m[\alpha_{1},\dots,\alpha_{n}]^{T}= \lambda_{1}v_{1}+\dots+
		 \lambda_{r}v_{r}.$$ The first coordinate in $(1)$ gives that $m_{1}+m\alpha_{1}=\lambda_{1}x_{1,1}$. If $x_{1,1} = 0$, then $m=0$ and thus  $A_{\alpha}\cap M\subset\mathbb{N}^{n}\cap M$ which is clearly not dense in $M$, a contradiction.\\

		  Set $I=\{i\in\{1,\dots,r\}: x_{i,i}=0\}$ and
$k=\begin{cases}
\min(I) & \textrm{ if } I\neq\emptyset \\
r+1  & \textrm{ if } I=\emptyset
\end{cases}.$
	\medskip
		
Obviously $2\leq k\leq r+1$ and $x_{i,i}\neq 0$, for $1\leq i\leq k-1$. Let us put $$\begin{cases}
v^{(k)}_{1} & = [x_{1,1},x_{2,1},\dots,x_{k,1}]^{T},\\
v^{(k)}_{2} & = [0,x_{2,2},\dots,x_{k,2}]^{T},\\
\vdots\\
 v^{(k)}_{k-1}& = [0,\dots,0,x_{k-1,k-1}, x_{k,k-1}]^{T}.
\end{cases}$$

Then \ \ $v^{(k)}_{1},v^{(k)}_{2},\dots,v^{(k)}_{k-1}$ are linearly independent in $\mathbb{R}^{k}$. Set \\

$\bullet$ $A^{(k)}_{\alpha} = \mathbb{N}^{k}+\mathbb{N}[\alpha_{1},\dots, \alpha_{k}]^{T}$.

$\bullet$ $M^{(k)} = \textrm{span}\{v^{(k)}_{1},v^{(k)}_{2},\dots,v^{(k)}_{k-1}\}.$ 
\medskip

Since $A_{\alpha}\cap M$ is dense in $M$, so $A^{(k)}_{\alpha}\cap M^{(k)}$ is dense in $M^{(k)}$. Then
one can assume, by some elementary transformations of the $v^{(k)}_{i}$, $i=1, \dots, k-1$, that $M^{(k)}$ has a basis  $\{u_{1},u_{2},\dots,u_{k-1}\}$, where
$$\begin{cases}
u_{1} & = [1 ,0,\ldots,0,y_{1}]^{T},\\
u_{2} & = [0 ,1, 0,\ldots,0,y_{2}]^{T},\\
\vdots\\
u_{k-1} & = [0, \ldots,0,1,y_{k-1}]^{T},
\end{cases}$$ with $y_{i}\in\mathbb{R}$, $i=1, \dots, k-1$. Denote by
\medskip

 $\bullet$ $\Omega = \Big\{u=[m_{1},\dots,m_{k}]^{T}+m[\alpha_{1},\dots,\alpha_{k}]^{T}\in  A^{(k)}_{\alpha}\cap M^{(k)}: m\neq 0\Big\}$.

Then $\Omega$ is dense in $M^{(k)}$: indeed, we have  $(A^{(k)}_{\alpha}\cap M^{(k)})\setminus \Omega\subset\mathbb{N}^{k}\cap M^{(k)}$ which is nowhere dense in $M^{(k)}$. Since $A^{(k)}_{\alpha}\cap M^{(k)}$ is dense in $M^{(k)}$, so $\Omega$ is dense in $M^{(k)}$.
\bigskip
	
\textit{In the sequel, let $ u\in \Omega$ be fixed:} 

$$ \  u=[m_{1},\dots,m_{k}]^{T}+m[\alpha_{1},\dots,\alpha_{k}]^{T}
= \lambda_{1}u_{1}+\dots+
\lambda_{k-1}u_{k-1},$$ where $m, m_1,\dots, m_{k}\in \mathbb{N}$, $m\neq 0$ and $\lambda_{1},\dots,\lambda_{k-1}\in\mathbb{R}$.
\medskip	
Observe that for every $v\in \Omega$, we have $ v=[t_{1},\dots,t_{k}]^{T}+t[\alpha_{1},\dots,\alpha_{k}]^{T}=
\delta_{1}u_{1}+\dots+\delta_{k-1}u_{k-1},$ where $\delta_{1}, \dots, \delta_{k-1}\in\mathbb{R}$, $t_1, \dots, t_k, t\in \mathbb{N}$ with $t\neq 0$. We obtain for $i=1, \dots, k-1$:\\

$\begin{cases}
t_{i}+t\alpha_{i} & = \delta_{i} \\
m_{i}+m\alpha_{i} & = \lambda_{i} \\
mv-tu & =(mt_{1}-tm_{1})u_{1}+\dots+(mt_{k-1}-tm_{k-1})u_{k-1}
\end{cases}$\\
\medskip
	
We have
\medskip

\qquad $(*) \ \ m_{k}+m\alpha_{k} = \ (m_{1}+m\alpha_{1})y_{1}+
\dots+(m_{k-1}+m\alpha_{k-1})y_{k-1}$. \\

Moreover, the last row of $mv-tu$ gives that\\

\qquad $(**) \ \ mt_{k}-tm_{k} = (mt_{1}-tm_{1})y_{1}+\dots+(mt_{k-1}-tm_{k-1})y_{k-1}$.
\\

 From (**), we distinguish two cases:\\
		
\textit{Case 1: $1, y_{1},\dots,y_{k-1}$ are linearly independent over $\mathbb{Q}$.} \ In this case, for every $v\in  \Omega$, we have from $(**)$, $mt_{i}-tm_{i}=0$ for $i=1,\dots,k$. Therefore $mv=tu$. It follows that $m\Omega\subset\mathbb{N} u$ and thus by density $M^{(k)}=\mathbb{N}u =\mathbb{R}u$. A contradiction.\\
		
\textit{ Case 2: $1, y_{1},\dots,y_{k-1}$ are not linearly independent over $\mathbb{Q}$.} \\ First, observe that  $ y_{1},\dots,y_{k-1}$ are not all in $\mathbb{Q}$; otherwise, $ y_{1},\dots,y_{k-1}\in\mathbb{Q}$ and then by  $(*)$, $m=0$. Therefore $A^{(k)}_{\alpha}\cap M^{(k)}\subset\mathbb{N}^{k}\cap M^{(k)}$ and thus $A^{(k)}_{\alpha}\cap M^{(k)}$ is not dense in $M^{(k)}$. A contradiction. \\

Second, let $j$ be the maximal number of the rationally independent numbers with $1$ among $y_{1},\dots,y_{k-1}$. This means that there exist \\
$1\leq i_{1}<i_{2}<\dots <i_{j}\leq k-1$ such that:
\medskip

 $-$ $\{1, y_{i_1},\dots,y_{i_j}\}$ is linearly independent over $\mathbb{Q}$,

 $-$ for every $p\in\{1,\dots,k-1\}\setminus\{i_{1},i_{2},\dots,i_{j}\}$,
\\

$(***)\ \   y_{p}=a_{0,p}+a_{1,p}y_{i_1}+\dots+a_{j,p}y_{i_j},$
for some  $a_{0,p},a_{1,p},\dots,a_{j,p}\in\mathbb{Q}$.  
\medskip

Set $K_{j}:=\{1,\dots,k-1\}\setminus\{i_{1},i_{2},\dots,i_{j}\}$. Notice that $1\leq j\leq k-2$.\\
 
		

\medskip
	
Now let $v\in \Omega:$ $ v=[t_{1},\dots,t_{k}]^{T}+t[\alpha_{1},\dots,\alpha_{k}]^{T},\ \hbox{with}\ \ t\neq 0$. From $(**)$ and $(***)$, we have that
		\begin{align*}
				mt_{k}-tm_{k}& =  \underset{s=1}{\overset{j}{\sum}}(mt_{i_{s}}-tm_{i_{s}})y_{i_{s}} + 
				\underset{p\in K_{j}}{\overset{}{\sum}}(mt_{p}-tm_{p})
				(a_{0,p}+a_{1,p}y_{i_1}+\dots+a_{j,p}y_{i_j})\\
				& = \underset{p\in K_{j}}{\overset{}{\sum}}(mt_{p}-tm_{p})a_{0,p} +\Big[(mt_{i_1}-tm_{i_1})+\underset{p\in K_{j}}{\overset{}
					{\sum}}(mt_{p}-tm_{p})a_{1,p}\Big]y_{i_1} +
				\\
				& \dots + \Big[(mt_{i_j}-tm_{i_j})+\underset{p\in K_{j}}{\overset{}
					{\sum}}(mt_{p}-tm_{p})a_{j,p}\Big]y_{i_j}.
		\end{align*}
		Since $\{1, y_{i_1},\dots, y_{i_j}\}$ is linearly independent over $\mathbb{Q}$, we obtain that:
		$$mt_{k}-tm_{k} = \underset{p\in K_{j}}{\overset{}{\sum}}(mt_{p}-tm_{p})a_{0,p}$$ and for all  \ $s=1,\dots,j$:
		$$ (mt_{i_s}-tm_{i_s})+\underset{p\in K_{j}}{\overset{}
				{\sum}}(mt_{p}-tm_{p})a_{s,p}=0.$$
			
			Let $(e_1,\dots,e_k)$ be the canonical basis  of $\mathbb{R}^{k}$. Then we have
				\begin{eqnarray*}
					mv-tu &=& \underset{s=1}{\overset{k-1}{\sum}}(mt_{s}-tm_{s})e_{s}+ (mt_{k}-tm_{k})e_{k}\\
					&=&\underset{s=1}{\overset{j}{\sum}}(mt_{i_s}-tm_{i_s})e_{i_s}+\underset{p\in K_{j}}{\overset{}{\sum}}(mt_{p}-tm_{p})e_{p}+(mt_{k}-tm_{k})e_{k}\\
					&=&-\underset{s=1}{\overset{j}{\sum}}(\underset{p\in K_{j}}{\overset{}
						{\sum}}(mt_{p}-tm_{p})a_{s,p})e_{i_s}+
					\underset{p\in K_{j}}{\overset{}{\sum}}(mt_{p}-tm_{p})(e_{p}+a_{0,p}e_{k})\\
					& = & \underset{p\in K_{j}}{\overset{}{\sum}}(mt_{p}-tm_{p})w_{p,j}+
					\underset{p\in K_{j}}{\overset{}{\sum}}(mt_{p}-tm_{p})w_{p,k},
				\end{eqnarray*}
				where $w_{p,j}=-\underset{s=1}{\overset{j}{\sum}}a_{s,p}e_{i_s}$ and 
				$w_{p,k}=e_{p}+a_{0,p}e_{k}$.
				It follows that
				$$mv-tu = t e^{\prime}_{1}+ \underset{p\in K_{j}}{\overset{}{\sum}}t_{p}e^{\prime}_{p},$$
			where $e^{\prime}_{1} = - \underset{p\in K_{j}}{\overset{}{\sum}}m_p(w_{p,j}+w_{p,k})$ and $e^{\prime}_{p} = m(w_{p,j}+w_{p,k})\in \mathbb{R}^{k}$, $p\in K_{j}$.	Hence \quad
				$mv = t (e^{\prime}_{1}+u)+\underset{p\in K_{j}}{\overset{}{\sum}}t_{p}e^{\prime}_{p}.$
				Therefore	
				$$m\Omega\subset \mathbb{N} (e^{\prime}_{1}+u)+\underset{p\in K_{j}}{\overset{}{\sum}}\mathbb{N}e^{\prime}_{p}\subset\textrm{span}\{e^{\prime}_{1}+u, e^{\prime}_{p}: p\in K_{j}\}.$$ 
			Since $ \Omega$ is dense in $M^{(k)}$, so $M^{(k)}\subset \textrm{span}\{e^{\prime}_{1}+u, e^{\prime}_{p}: p\in K_{j}\}$. As dim$M^{(k)}=k-1\leq k-j$, thus $j=1$ and hence $M^{(k)} = \textrm{span}\{e^{\prime}_{1}+u, e^{\prime}_{p}: p\in K_{1}\}$. It follows that $\mathbb{N} (e^{\prime}_{1}+u)+\underset{p\in K_{1}}{\overset{}{\sum}}\mathbb{N}e^{\prime}_{p}\subset M^{(k)}$ is dense in M$^{(k)}$. A contradiction with Lemma \ref{l23}. This completes the proof of Theorem \ref{t21}. \qed
\medskip

\begin{thm}[The case $X=\mathbb{C}^{n}$] \label{t24}
	Let $ n\geq 2$ be an integer and let $\alpha = (\alpha_{1},\dots, \alpha_{n})$, $\beta= (\beta_{1},\dots, \beta_{n})$ be two $n$-tuples of negative real numbers such that $1, \alpha_{1},\dots, \alpha_{n}, \beta_{1},\dots, \beta_{n}$ are linearly independent over $\mathbb{Q}$. Set \\ $A_{\alpha,\beta} = \mathbb{N}^{n}+i\mathbb{N}^{n}+\mathbb{N}[\alpha_{1}+i\beta_{1},\dots, \alpha_{n}+i\beta_{n}]^{T}$. Then $A_{\alpha,\beta}$ is dense in $\mathbb{C}^{n}$ and for every nontrivial subspace $M$ of $\mathbb{C}^{n}$, $A_{\alpha,\beta}\cap M$ is not dense in $M$.
\end{thm}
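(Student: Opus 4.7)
The plan is a direct reduction to Theorem \ref{t21} via the canonical realification $\mathbb{C}^{n}\cong\mathbb{R}^{2n}$. Under the $\mathbb{R}$-linear isomorphism sending $(z_{1},\dots,z_{n})\mapsto (\mathrm{Re}(z_{1}),\mathrm{Im}(z_{1}),\dots,\mathrm{Re}(z_{n}),\mathrm{Im}(z_{n}))$, a typical element $[m_{1}+ip_{1},\dots,m_{n}+ip_{n}]^{T}+m[\alpha_{1}+i\beta_{1},\dots,\alpha_{n}+i\beta_{n}]^{T}$ of $A_{\alpha,\beta}$ (with $m_{j},p_{j},m\in\mathbb{N}$) is sent to $[m_{1},p_{1},\dots,m_{n},p_{n}]^{T}+m[\alpha_{1},\beta_{1},\dots,\alpha_{n},\beta_{n}]^{T}\in\mathbb{R}^{2n}$. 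Hence, writing $\gamma=(\alpha_{1},\beta_{1},\dots,\alpha_{n},\beta_{n})$, the set $A_{\alpha,\beta}$ is identified with $A_{\gamma}=\mathbb{N}^{2n}+\mathbb{N}[\gamma_{1},\dots,\gamma_{2n}]^{T}$, where the components of $\gamma$ are $2n$ negative real numbers.

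The hypothesis that $1,\alpha_{1},\dots,\alpha_{n},\beta_{1},\dots,\beta_{n}$ are linearly independent over $\mathbb{Q}$ is precisely the linear independence required to apply Lemma \ref{l22} and Theorem \ref{t21} in dimension $2n$ with parameter vector $\gamma$. Density of $A_{\alpha,\beta}$ in $\mathbb{C}^{n}$ is then immediate from Lemma \ref{l22}. For the second claim, let $M$ be a nontrivial (complex) subspace of $\mathbb{C}^{n}$ of complex dimension $r$ with $1\leq r\leq n-1$. Viewed as a real subspace of $\mathbb{R}^{2n}$, $M$ has real dimension $2r$ with $2\leq 2r\leq 2n-2$, so it is in particular a nontrivial real subspace of $\mathbb{R}^{2n}$. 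Applying Theorem \ref{t21} to $\gamma$ and to this real subspace, we conclude that $A_{\gamma}\cap M$, hence $A_{\alpha,\beta}\cap M$, is not dense in $M$.

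There is essentially no separate difficulty in this argument: the entire analytic content of Theorem \ref{t24} is absorbed by Theorem \ref{t21}, and the only point worth verifying is that a nontrivial complex subspace of $\mathbb{C}^{n}$ remains nontrivial as a real subspace of $\mathbb{R}^{2n}$, which is immediate from the dimension count $2\leq 2r\leq 2n-2$. The fact that the statement is formulated with $2n$ independent parameters $\alpha_{1},\dots,\alpha_{n},\beta_{1},\dots,\beta_{n}$ (rather than merely $n$) is exactly what makes this realification reduction legitimate, since Kronecker's theorem must be applied in the ambient real dimension $2n$.
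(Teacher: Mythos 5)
Your proof is correct and follows essentially the same route as the paper: realify via $\phi:\mathbb{C}^{n}\to\mathbb{R}^{2n}$, observe that $A_{\alpha,\beta}$ becomes $A_{\gamma}$ with $\gamma=(\alpha_{1},\beta_{1},\dots,\alpha_{n},\beta_{n})$, and apply Theorem \ref{t21} in dimension $2n$ to the image of $M$. If anything, your treatment of the image subspace is slightly cleaner: you only use that $\phi(M)$ is a nontrivial real subspace of real dimension $2r$, whereas the paper writes $\phi(M)=M_{1}\times M_{2}$ for real subspaces $M_{1},M_{2}$ of $\mathbb{R}^{n}$, a product decomposition that does not hold for a general complex subspace and is in any case unnecessary.
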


We use the following complex version of Kronecker's theorem.
\medskip

\begin{lem}[Kronecker's theorem: complex version]\label{t:61} Let $n\in \mathbb{N}_{0}$ and let 
$\alpha_{1},\dots,\alpha_{n}; \beta_{1}, \dots,\beta_{n}$ be negative real numbers such that $1,
	\alpha_{1},\dots,\alpha_{n},\beta_{1},\dots,\beta_{n}$ are linearly independent over $\mathbb{Q}$. Then the set
	 $$\mathbb{N}^{n}+i\mathbb{N}^{n}+\mathbb{N}[\alpha_{1}+i\beta_{1},\dots,\alpha_{n}+i\beta_{n}]^{T}$$ is dense in $\mathbb{C}^{n}$.
\end{lem}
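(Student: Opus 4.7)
The plan is to reduce the complex statement directly to the real Kronecker theorem (Lemma \ref{l22}) by identifying $\mathbb{C}^{n}$ with $\mathbb{R}^{2n}$ in the standard way, namely
$$(z_{1},\dots,z_{n})\longmapsto (\mathrm{Re}(z_{1}),\dots,\mathrm{Re}(z_{n}),\mathrm{Im}(z_{1}),\dots,\mathrm{Im}(z_{n})).$$
This is a homeomorphism, so density in $\mathbb{C}^{n}$ is equivalent to density of the image in $\mathbb{R}^{2n}$.

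Next, I would compute the image of a generic element of $A_{\alpha,\beta}$. Writing $[m_{1},\dots,m_{n}]^{T}+i[m_{1}',\dots,m_{n}']^{T}+m[\alpha_{1}+i\beta_{1},\dots,\alpha_{n}+i\beta_{n}]^{T}$ with $m,m_{k},m_{k}'\in\mathbb{N}$, the $k$-th coordinate has real part $m_{k}+m\alpha_{k}$ and imaginary part $m_{k}'+m\beta_{k}$. Under the identification, this vector becomes
$$[m_{1},\dots,m_{n},m_{1}',\dots,m_{n}']^{T}+m[\alpha_{1},\dots,\alpha_{n},\beta_{1},\dots,\beta_{n}]^{T}\in\mathbb{R}^{2n},$$
so the image of $A_{\alpha,\beta}$ is exactly
$$\mathbb{N}^{2n}+\mathbb{N}[\alpha_{1},\dots,\alpha_{n},\beta_{1},\dots,\beta_{n}]^{T}.$$

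Finally, the hypotheses on $\alpha_{1},\dots,\alpha_{n},\beta_{1},\dots,\beta_{n}$ are precisely what Lemma \ref{l22} requires for the tuple of $2n$ numbers $(\alpha_{1},\dots,\alpha_{n},\beta_{1},\dots,\beta_{n})$: they are all negative real numbers, and together with $1$ they are linearly independent over $\mathbb{Q}$. Applying Lemma \ref{l22} with $n$ replaced by $2n$ yields the density of the above set in $\mathbb{R}^{2n}$, which by the identification means $A_{\alpha,\beta}$ is dense in $\mathbb{C}^{n}$.

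There is essentially no obstacle here; the only point to be careful about is matching coordinates correctly under the identification $\mathbb{C}^{n}\cong\mathbb{R}^{2n}$, so that the ``purely natural'' part contributes a copy of $\mathbb{N}^{2n}$ and the ``$m$-multiplied'' part contributes the single real vector $(\alpha_{1},\dots,\alpha_{n},\beta_{1},\dots,\beta_{n})^{T}$. Once this bookkeeping is done, the complex version is an immediate consequence of the real one.
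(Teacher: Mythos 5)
Your proof is correct and follows exactly the paper's approach: the paper's entire proof is the one-line reduction ``identify $\mathbb{C}^{n}$ with $\mathbb{R}^{2n}$ and apply Lemma \ref{l22}'', and you have simply carried out the coordinate bookkeeping explicitly. The only cosmetic difference is that you group all real parts before all imaginary parts while the paper (elsewhere) interleaves them as $(x_{1},y_{1};\dots;x_{n},y_{n})$; both identifications produce a tuple of $2n$ negative reals that are, together with $1$, linearly independent over $\mathbb{Q}$, so the conclusion is the same.
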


\begin{proof} This results from Lemma \ref{l22} by identifying $\mathbb{C}^{n}$ with $\mathbb{R}^{2n}$ in the natural way.
\end{proof}
\medskip

\begin{proof} [Proof of Theorem \ref{t24}] First $A_{\alpha,\beta}$ is dense in $\mathbb{C}^{n}$ by Lemma \ref{t:61}.
Assume there is a nontrivial subspace $M$ of $\mathbb{C}^{n}$ such that $A_{\alpha,\beta}\cap M$ is dense in $M$. Let $M_{1}, M_{2}$ be two subspaces of  $\mathbb{R}^{n}$ such that $M=M_{1}+ iM_{2}$. By the isomorphism \ $\phi: (x_{1}+iy_{1},\dots,x_{n}+iy_{n})\in\mathbb{C}^{n}\mapsto(x_{1},y_{1};\dots;x_{n},y_{n})
\in\mathbb{R}^{2n},$ we have that $\phi(A_{\alpha,\beta}) = \mathbb{N}^{2n}+\mathbb{N}[\alpha_{1},\beta_{1};\dots; \alpha_{n},\beta_{n}]^{T}$ and \\ $\phi(A_{\alpha,\beta}\cap M) = \phi(A_{\alpha,\beta})\cap \phi(M) = A_{\mu}\cap (M_{1}\times M_{2}),$ where $\mu=[\alpha_{1},\beta_{1};\dots; \alpha_{n},\beta_{n}]^{T}$. Then $A_{\mu}\cap (M_{1}\times M_{2})$ is dense in $M_{1}\times M_{2}$, ($M_{1}\times M_{2}$ is a nontrivial subspace of $\mathbb{R}^{2n}$). This leads to a contradiction with Theorem \ref{t21}.
\end{proof}
\medskip

We give in the two propositions below other dense subsets $A$ of $\mathbb{K}^{2}$
such that for every straight line $\Delta$ in $\mathbb{K}^{2}$, $A\cap \Delta$ is not dense in $\Delta$.
\medskip

	\begin{prop}[{Counterexample in $\mathbb{C}^{2}$}]
		\label{p21} Let $\theta_{1}, \theta_{2}\in\mathbb{R}$ such that $1, \theta_{1}, \theta_{2}$ are rationally independent and set $A_2=\{[r_{1}e^{2i\pi n_{1}\theta_{1}},r_{2}e^{2i\pi n_{2}\theta_{2}}]^{T}: r_{1}, r_{2}\in \mathbb{R}_{+}^  {*}, n_{1}, n_{2}\in\mathbb{N}\}$. Then $A_2$ is dense in $\mathbb{C}^{2}$, but $A_2\cap \Delta$ is not dense in $\Delta$, for every straight line $\Delta$ in $\mathbb{C}^{2}$.
	\end{prop}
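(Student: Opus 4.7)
The plan proceeds in two parts: density of $A_{2}$ in $\mathbb{C}^{2}$, and non-density of $A_{2}\cap\Delta$ on every complex line $\Delta\subset\mathbb{C}^{2}$.

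For density, I would use the one-dimensional Kronecker theorem: since $1,\theta_{1},\theta_{2}$ are rationally independent, each $\theta_{j}$ is irrational, so $\{n\theta_{j}\bmod 1:n\in\mathbb{N}\}$ is dense in $\mathbb{R}/\mathbb{Z}$. Given a target $(z_{1},z_{2})\in\mathbb{C}^{2}$ with $z_{j}\neq 0$, write $z_{j}=\rho_{j} e^{i\varphi_{j}}$ with $\rho_{j}>0$, take $r_{j}=\rho_{j}$, and pick $n_{j}\in\mathbb{N}$ independently so that $2\pi n_{j}\theta_{j}$ is arbitrarily close to $\varphi_{j}$ modulo $2\pi$. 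Points with a vanishing coordinate are approximated by letting the corresponding $r_{j}\to 0^{+}$.

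For non-density, let $\Delta=\mathbb{C}\cdot(a,b)$ be a complex line in $\mathbb{C}^{2}$. If $a=0$ or $b=0$, then $\Delta$ is a coordinate axis and $A_{2}\cap\Delta=\emptyset$, since every point of $A_{2}$ has two nonzero coordinates. Assume now $ab\neq 0$. A point $[r_{1} e^{2\pi i n_{1}\theta_{1}},\,r_{2} e^{2\pi i n_{2}\theta_{2}}]^{T}\in A_{2}$ lies on $\Delta$ exactly when the ratio of its two coordinates equals $b/a$. Writing $b/a=\rho e^{i\varphi}$ with $\rho>0$, this becomes the pair of conditions $r_{2}=\rho r_{1}$ and
\[
n_{2}\theta_{2}-n_{1}\theta_{1}\equiv \varphi/(2\pi)\pmod 1.
\]

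The crux of the argument is that this phase equation admits at most one solution $(n_{1},n_{2})\in\mathbb{N}^{2}$: if $(n_{1},n_{2})$ and $(n_{1}',n_{2}')$ both satisfy it, subtraction gives $(n_{2}-n_{2}')\theta_{2}-(n_{1}-n_{1}')\theta_{1}\in\mathbb{Z}$, which, rearranged, is a rational linear relation among $1,\theta_{1},\theta_{2}$; rational independence then forces $n_{1}=n_{1}'$ and $n_{2}=n_{2}'$. Consequently $A_{2}\cap\Delta$ is either empty or the single open ray $\{r_{1}(e^{2\pi i n_{1}\theta_{1}},\rho e^{2\pi i n_{2}\theta_{2}}):r_{1}>0\}$, which is a one-real-dimensional subset of the two-real-dimensional line $\Delta\cong\mathbb{C}$ and hence not dense in $\Delta$. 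The main obstacle is precisely this uniqueness step, which relies critically on the rational independence of $1,\theta_{1},\theta_{2}$; the remaining steps are routine polar-coordinate manipulations.
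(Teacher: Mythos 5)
Your proposal is correct and follows essentially the same route as the paper: reduce to the case $ab\neq 0$, translate membership in $\Delta$ into a phase condition on $n_{2}\theta_{2}-n_{1}\theta_{1}$ modulo $1$, and use rational independence of $1,\theta_{1},\theta_{2}$ to show the solution $(n_{1},n_{2})$ is unique, so that $A_{2}\cap\Delta$ is a single open ray. The density argument via the one-dimensional Kronecker theorem applied coordinatewise is just a more detailed version of what the paper asserts.
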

	
	\begin{proof} As $1, \theta_{1}, \theta_{2}$ are rationally independent, then $A$ is dense in $\mathbb{C}^{2}$. We let
		$\Delta = \mathbb{C}u$, where  $u = [a_{1},a_{2}]^{T}\in\mathbb{C}^{2}\setminus \{[0,0]^{T}\}$. Suppose that $A_2\cap \Delta$ is dense in $\Delta$. One can assume that $a_{1}a_{2}\neq 0$ (since $A_2\cap \Delta = \emptyset$ whenever $a_{1}a_{2}=0$).  We let  $\frac{a_{2}}{a_{1}} = \frac{|a_{2}|}{|a_{1}|}e^{2i\pi\theta}$, where $\theta\in\mathbb{R}$.   An element $z\in A_2\cap \Delta$ can be written as $z = [r_{1}e^{2i\pi n_{1}\theta_{1}},r_{2}e^{2i\pi n_{2}\theta_{2}}]^{T} = \lambda [a_{1},a_{2}]^{T}$, where $\lambda\in\mathbb{C}$. We have $\frac{a_{2}}{a_{1}} = \frac{|a_{2}|}{|a_{1}|}e^{2i\pi(n_{2}\theta_{2}-n_{1}\theta_{1})}= \frac{|a_{2}|}{|a_{1}|}e^{2i\pi\theta}$. Hence $\theta = n_{2}\theta_{2}-n_{1}\theta_{1}+ k$ for some $k\in\mathbb{Z}$. The triplet $(n_{1}, n_{2}, k)$ is unique. Indeed, if $(t_{1}, t_{2}, k^{\prime})$ is another triplet such that $\theta=t_{2}\theta_{2}-t_{1}\theta_{1}+ k^{\prime}$, then $(n_{2}-t_{2})\theta_{2}-(n_{1}-t_{1})\theta_{1}+(k- k^{\prime})=0$. As $1, \theta_{1}, \theta_{2}$ are rationally independent, then $n_{2}=t_{2}, n_{1}=t_{1},\ k= k^{\prime}$. Finally, $A\cap \Delta= \mathbb{R}^{*}_{+} v$, where $v = [e^{2i\pi n_{1}\theta_{1}},\frac{|a_{2}|}{|a_{1}|}e^{2i\pi n_{2}\theta_{2}}]^{T}$ and $n_{1}$ and $n_{2}$ are fixed. Clearly $A\cap \Delta$ is not dense in $\Delta$.
	\end{proof}
	
	\medskip
	
	\begin{prop}[{Counterexample in $\mathbb{R}^{2}$}]
			\label{p12} \ 
			
			Let $B = \left\{r \left(\cos(2\pi \theta, \sin(2\pi \theta)\right): r\in \mathbb{R}_{+}^{*}, \theta\in[0,\frac{1}{2}[\cup \left([\frac{1}{2},1[
		\cap(\mathbb{R}\setminus\mathbb{Q})\right)\right\}$. Then $B$ is dense in $\mathbb{R}^{2}$, but for every straight line through the origin $\Delta$, the intersection $B\cap \Delta$, is not dense in $\Delta$.
	\end{prop}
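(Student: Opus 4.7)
I would split the proof into verifying density of $B$ in $\mathbb{R}^{2}$ and then showing non-density of $B \cap \Delta$ for an arbitrary line $\Delta$ through the origin.

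For density, observe that the admissible angle set $T = [0, \frac{1}{2}) \cup ([\frac{1}{2}, 1) \cap (\mathbb{R} \setminus \mathbb{Q}))$ is dense in $[0, 1)$: the first piece is already an interval, and the irrationals in $[\frac{1}{2}, 1)$ are dense in that half. Hence $\{e^{2i\pi\theta} : \theta \in T\}$ is a dense subset of the unit circle, and scaling by $\mathbb{R}_{+}^{*}$ produces a subset of $\mathbb{R}^{2}$ that meets every open disc: given a disc $D$, I would first pick an admissible direction $e^{2i\pi\theta_{0}}$ pointing into $D$, then choose a positive $r_{0}$ placing $r_{0} e^{2i\pi\theta_{0}}$ inside $D$.

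For the non-density claim, I would parametrize an arbitrary line through the origin as $\Delta = \mathbb{R} u$ with $u = e^{2i\pi\alpha}$, $\alpha \in [0, \frac{1}{2})$, so that $\Delta \setminus \{0\}$ decomposes as the disjoint union of two open rays, $R^{+} = \{r e^{2i\pi\alpha} : r > 0\}$ (of argument $\alpha$) and $R^{-} = \{r e^{2i\pi(\alpha + 1/2)} : r > 0\}$ (of argument $\alpha + \frac{1}{2} \in [\frac{1}{2}, 1)$). A point of $R^{\pm}$ lies in $B$ iff its canonical argument in $[0,1)$ belongs to $T$, and since $\alpha$ and $\alpha + \frac{1}{2}$ differ by the rational number $\frac{1}{2}$ they share their rationality class. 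A short case split on whether $\alpha \in \mathbb{Q}$ or $\alpha \notin \mathbb{Q}$ then pairs the two defining pieces of $T$ against the rays $R^{\pm}$, showing that only one of them is contained in $B$ while the other is disjoint from $B$. Thus $B \cap \Delta$ sits inside a single open half-line from the origin, and any point of the opposite half-line admits an open neighbourhood in $\Delta$ avoiding $B$, so $B \cap \Delta$ is not dense in $\Delta$.

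The heart of the argument --- and the only place where the precise shape of $B$ is used --- is the rationality case analysis matching the two clauses in the definition of $B$ with the two rays of $\Delta$; the density step and the concluding half-line observation are routine.
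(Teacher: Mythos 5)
The density half of your argument is fine, but the non-density half contains a genuine error --- and it sits exactly where the statement itself breaks down. Your claimed case split (that for each line $\Delta=\mathbb{R}e^{2i\pi\alpha}$, $\alpha\in[0,\frac{1}{2})$, exactly one of the rays $R^{+}$, $R^{-}$ is contained in $B$ and the other is disjoint from it) fails when $\alpha$ is irrational. The first clause of the angle set $T$ is the \emph{full} interval $[0,\frac{1}{2})$, with no rationality restriction, so $R^{+}\subset B$ for every $\alpha\in[0,\frac{1}{2})$ whatsoever; and when $\alpha$ is irrational, $\alpha+\frac{1}{2}\in[\frac{1}{2},1)$ is also irrational, so $R^{-}\subset B$ by the second clause. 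Hence for every line whose angle $\alpha$ is irrational one gets $B\cap\Delta=\Delta\setminus\{0\}$, which \emph{is} dense in $\Delta$. So your ``pairing'' of the two clauses of $T$ against the two rays cannot be carried out for the set $B$ as printed, and no proof can: the proposition is false as stated. (For what it is worth, the paper's own proof is the single assertion that $B\cap\Delta$ ``is only a half line'', which is exactly the step that fails for these lines; only lines with rational $\alpha$, such as the coordinate axes, satisfy the claim.)

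Both the statement and your argument become correct if the first clause is cut down to $[0,\frac{1}{2})\cap\mathbb{Q}$, i.e. if one takes $B=\{re^{2i\pi\theta}: r\in\mathbb{R}_{+}^{*},\ \theta\in([0,\frac{1}{2})\cap\mathbb{Q})\cup([\frac{1}{2},1)\cap(\mathbb{R}\setminus\mathbb{Q}))\}$. The modified $T$ is still dense in $[0,1)$ (rationals are dense in the first half, irrationals in the second), so your density argument survives; and since $\alpha$ and $\alpha+\frac{1}{2}$ share a rationality class but land in different halves of $[0,1)$, exactly one of $R^{+}$, $R^{-}$ is contained in $B$ while the other misses it entirely. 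Then $B\cap\Delta$ really is a single open half-line, and your concluding observation finishes the proof. I would state this correction explicitly rather than presenting the case analysis as if it worked for the printed $B$.
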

	
	\begin{proof} $B$ is dense $\mathbb{R}^{2}$ since $[\frac{1}{2},1[
		\cap(\mathbb{R}\setminus\mathbb{Q})$ is dense in $[\frac{1}{2},1[$. Now, $B\cap \Delta$ is only a half line, and thus not dense in $\Delta$.
	\end{proof}	
\medskip

\begin{rem}\label{r26} \rm{We point out that when $X$ is of finite dimension, Lemma 2.3 of \cite{bkk} used in the proof of Theorem \cite[Theorem 2.1]{bkk} is not true.
Indeed, let
$X = \mathbb{K}^{n}$, $n\geq 1$. Fix an arbitrary
$(n - 1)$-dimensional subspace $Y$ of $X$, and let
$A = X~\backslash~Y$, which is dense in $X$. Let $e\in X$
with d$(e, Y)>1$, which exists ($d(e, Y) = \inf_{y\in Y}\|e-y\|$, where $\| \ \|$ denotes the euclidean norm in  $\mathbb{K}^{n}$). Then Lemma 2.3 of \cite{bkk} claims that there is some $a\in A$
such that $d(e, \textrm{span}\{Y, a\})
> 1$, which is absurd since $e\in \textrm{span}\{Y, a\}$.}
\end{rem}
\bigskip
 
\textit{Acknowledgements.}
This work was supported by the research unit: ``Dynamical systems and their applications'' [UR17ES21], Ministry of
Higher Education and Scientific Research, Faculty of Science of Bizerte, Tunisia.

\vskip 0,2 cm
\bibliographystyle{amsplain}
\vskip 0,4 cm

\end{document}